\newfont{\cyrr}{wncyr10}
\newcommand{\thmref}[1]{Theorem~\ref{#1}}
\newcommand{\lemref}[1]{Lemma~\ref{#1}}
\newtheorem{thm}{Theorem}
\newtheorem{lem}[thm]{Lemma}
\newtheorem{rmk}{Remark}[section]
\newcommand{\Z}{{\mathbb Z}}
\def\({\left(}
\def\){\right)}
\def\[{\left[}
\def\]{\right]}
\def\N{\mathbb{N}}
\def\R{\mathbb{R}}
\def\C{\mathbb{C}}
\title{On Hecke eigenvalues of Ikeda lifts}
\author{Sanoli Gun and Sunil Naik}
\address{Sanoli Gun  \newline
The Institute of Mathematical Sciences, 
A CI of Homi Bhabha National Institute, 
CIT Campus, Taramani, 
Chennai 600 113, 
India.}
\email{sanoli@imsc.res.in}
\address{Sunil L Naik \newline
Department of Mathematics,
Queen's University, Jeffrey Hall, 
99 University Avenue, 
Kingston, ON K7L3N6, 
Canada}
\email{naik.s@queensu.ca}
\begin{document}
	
\hfuzz 5pt	

\subjclass[2020]{11F11, 11F30, 11F46, 11N56, 11B65}

\keywords{Ikeda lifts, Non-vanishing and bounds of Hecke eigenvalues,  
$q$-binomial theorem, Reciprocal polynomials}	
	
\maketitle
		
\begin{abstract}
A well known result of Breulmann states that
Hecke eigenvalues of Saito-Kurokawa lifts are positive.
In this article, we show that the Hecke eigenvalues of an
Ikeda lift at primes are positive. Further, we derive lower 
and upper bounds of these Hecke eigenvalues for all primes $p$. 
One of the main ingredients involves expressing the Hecke 
eigenvalues of an Ikeda lift in terms of certain reciprocal polynomials. 
\end{abstract}
	
\section{Introduction and Statements of Results}
Throughout the article, let $k, n$ denote 
even positive integers with $k > n+1$ and
$\Gamma_n = Sp_n(\Z) \subseteq {\rm GL}_{2n(\Z)}$ 
denotes the full Siegel modular group of degree $n$. 
Also let $S_k(\Gamma_n)$ denotes the space of Siegel cusp forms 
of weight $k$ and degree $n$ for $\Gamma_n$. 
Let $F \in S_k(\Gamma_n)$ be an Ikeda lift with 
Hecke eigenvalues $\{\lambda_F(m) : m \in \N\}$. 
When $n=2$, $F$ is called a Saito-Kurokawa lift.

In 1999, Breulmann \cite{SB} proved that 
if $F$ is a Saito-Kurokawa lift, then 
$\lambda_F(m) > 0$ for every $m \in \N$. 
In \cite{GPS}, Gun, Paul and Sengupta  
derived a lower and upper bound for Hecke eigenvalues 
of Saito-Kurokawa lifts $F$. 
More precisely, they showed that there exist 
positive absolute constants $c_1$ and $c_2$ such that
$$
m^{k-1} \exp\(-c_2 \sqrt{\frac{\log m}{\log\log m}}\) 
~\leq~ \lambda_F(m) ~\leq~
m^{k-1} \exp\(c_1 \sqrt{\frac{\log m}{\log\log m}}\) 
$$
for all $m \geq 3$.

In this article, we prove that Hecke eigenvalues of Ikeda lifts 
at primes are positive. Further, we derive lower and upper bounds 
for Hecke eigenvalues of Ikeda lifts at primes.
More precisely, we prove the following theorem.
\begin{thm}\label{thm4}
Let $F \in S_k(\Gamma_n)$ be an Ikeda lift with 
Hecke eigenvalues $\{\lambda_F(m) : m \in \N\}$. Then we have
$$
\lambda_F(p) ~>~ 0
$$
for all primes $p$. Further, we have
$$	
p^{\frac{nk}{2} - \frac{n(n+1)}{4} +\frac{n^2}{8}}
\prod_{i=1}^{\frac{n}{2}} \(1- \frac{1}{p^{i-\frac{1}{2}}}\)^2
~\leq~ \lambda_F(p) ~\leq~
p^{\frac{nk}{2} - \frac{n(n+1)}{4} +\frac{n^2}{8}}
\prod_{i=1}^{\frac{n}{2}} \(1+ \frac{1}{p^{i-\frac{1}{2}}}\)^2
$$ 
for all primes $p$.
\end{thm}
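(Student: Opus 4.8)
The plan is to write $\lambda_F(p)$ explicitly in terms of the Satake parameter at $p$ of the elliptic Hecke eigenform underlying $F$, to recognise the resulting quantity as a power of $p$ times one polynomial evaluated at a point of $[-2,2]$, and then to read off the positivity and the two-sided bound. Recall that $F$ is the Ikeda lift of a normalised Hecke eigenform $f\in S_{2k-n}(\SL_2(\Z))$, and write its $p$-th Hecke eigenvalue as $a_f(p)=p^{(2k-n-1)/2}(\beta_p+\beta_p^{-1})$, so that $\beta_p+\beta_p^{-1}$ is real with $-2\le\beta_p+\beta_p^{-1}\le 2$ by Deligne's bound. Unwinding Ikeda's construction --- equivalently, the factorisation $L(s,F,\mathrm{St})=\zeta(s)\prod_{i=1}^{n}L(s+k-i,f)$ of the standard $L$-function of $F$ --- one finds that the classical Satake parameters $\alpha_0,\dots,\alpha_n$ of $F$ at $p$, normalised by $\alpha_0^{2}\alpha_1\cdots\alpha_n=p^{\,nk-n(n+1)/2}$, may be taken to be $\alpha_i=p^{\,i-\frac{n+1}{2}}\beta_p$ for $1\le i\le n$ and $\alpha_0=p^{\,\frac{nk}{2}-\frac{n(n+1)}{4}}\beta_p^{-n/2}$. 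Since the eigenvalue of the Hecke operator $T(p)$ on a degree-$n$ Siegel eigenform equals $\alpha_0\prod_{i=1}^{n}(1+\alpha_i)$, this yields
$$
\lambda_F(p)\;=\;p^{\,\frac{nk}{2}-\frac{n(n+1)}{4}}\,\beta_p^{-n/2}\prod_{i=1}^{n}\Bigl(1+p^{\,i-\frac{n+1}{2}}\beta_p\Bigr).
$$

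Next I would simplify the product $Q(\beta_p):=\prod_{i=1}^{n}\bigl(1+p^{\,i-\frac{n+1}{2}}\beta_p\bigr)$. Since $n$ is even, the exponents $i-\frac{n+1}{2}$ ($1\le i\le n$) are nonzero half-integers which, up to sign, exhaust the set $E:=\{\tfrac12,\tfrac32,\dots,\tfrac{n-1}{2}\}$ of size $n/2$; in particular the numbers $p^{\,i-\frac{n+1}{2}}$ form a set invariant under inversion, so $Q$ is a reciprocal polynomial of even degree $n$ in $\beta_p$ (its constant term being $1$), and therefore $Q(\beta_p)=\beta_p^{\,n/2}\,S(\beta_p+\beta_p^{-1})$ for a monic polynomial $S$ of degree $n/2$. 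Pairing the two factors of $Q$ with opposite exponents $\pm e$,
$$
\bigl(1+p^{-e}\beta_p\bigr)\bigl(1+p^{e}\beta_p\bigr)\;=\;\beta_p\bigl[(p^{e}+p^{-e})+(\beta_p+\beta_p^{-1})\bigr],
$$
identifies $S$ as $S(X)=\prod_{e\in E}(X+p^{e}+p^{-e})$; the same description of $S$ also falls out of expanding $Q$ by the $q$-binomial theorem with $q=p$ and observing that the resulting polynomial in $\beta_p$ is palindromic. Hence $\lambda_F(p)=p^{\,\frac{nk}{2}-\frac{n(n+1)}{4}}\,S(\beta_p+\beta_p^{-1})$, and one records that $\sum_{e\in E}e=n^{2}/8$, which is the origin of the term $\tfrac{n^{2}}{8}$ in the statement.

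To conclude, note that for each $e\in E$ we have $p^{e}+p^{-e}\ge p^{1/2}+p^{-1/2}>2\ge-(\beta_p+\beta_p^{-1})$, so every factor $(p^{e}+p^{-e})+(\beta_p+\beta_p^{-1})$ of $S(\beta_p+\beta_p^{-1})$ is strictly positive; therefore $\lambda_F(p)>0$. Moreover each such factor is an increasing linear function of $\beta_p+\beta_p^{-1}$ on $[-2,2]$, so substituting the extreme values $\pm2$ and using the identity $p^{e}+p^{-e}\mp2=p^{e}(1\mp p^{-e})^{2}$ gives
$$
\prod_{e\in E}p^{e}\Bigl(1-\tfrac{1}{p^{e}}\Bigr)^{2}\;\le\;S\bigl(\beta_p+\beta_p^{-1}\bigr)\;\le\;\prod_{e\in E}p^{e}\Bigl(1+\tfrac{1}{p^{e}}\Bigr)^{2}.
$$
Since $\prod_{e\in E}p^{e}=p^{\,n^{2}/8}$ and $E=\{\,i-\tfrac12:1\le i\le n/2\,\}$, multiplying through by $p^{\,\frac{nk}{2}-\frac{n(n+1)}{4}}$ reproduces exactly the inequalities asserted in the theorem.

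The only delicate step is the first one: extracting from Ikeda's construction the precise normalisation of the Satake parameters of $F$ at $p$ --- hence both the exponent $\tfrac{nk}{2}-\tfrac{n(n+1)}{4}$ of $p$ and the arithmetic progression of half-integer shifts appearing in $p^{\,i-(n+1)/2}$ --- together with, on the $q$-binomial route, the verification that the coefficients of $Q$ are palindromic in general. Everything afterwards is elementary, resting only on $|\beta_p|=1$ and the monotonicity on $[-2,2]$ of the linear factors comprising $S$.
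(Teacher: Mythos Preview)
Your argument is correct and lands on the same key identity as the paper, namely
\[
\lambda_F(p)\;=\;p^{\frac{nk}{2}-\frac{n(n+1)}{4}}\,\beta_p^{-n/2}\prod_{i=1}^{n}\bigl(1+p^{\,i-\frac{n+1}{2}}\beta_p\bigr),
\]
but the route differs in two places. First, you obtain this product directly from the Satake parameters of the Ikeda lift together with the standard formula $\lambda_F(p)=\alpha_0\prod_{i=1}^{n}(1+\alpha_i)$, whereas the paper starts from Keaton's summed expression for $\lambda_F(p)$ and factors it via the $q$-binomial theorem (their $\alpha_f(p)$ is your $\beta_p$). Second, for positivity the paper argues topologically: the function $z\mapsto G_p(z)/z^{n/2}$ is real and nonvanishing on $|z|=1$ and positive at $z=1$, hence positive on the whole circle. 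You instead pair the $n$ linear factors into $n/2$ pieces $(p^{e}+p^{-e})+(\beta_p+\beta_p^{-1})$ and note each is strictly positive because $p^{e}+p^{-e}>2$; this is more elementary and in fact is the natural consequence of the paper's own Lemma (the factorisation $g_p(x)=\prod_{i=1}^{n/2}(x+p^{k-i}+p^{k-n-1+i})$), which the paper proves but then does not use for positivity. The bound step is essentially the same in both: the paper estimates $\lvert 1+p^{\,j+(1-n)/2}e^{i\theta_p}\rvert$ on the circle, you evaluate the linear factors of $S$ at the endpoints $X=\pm 2$; these are the same computation in different coordinates. Your version is a bit more direct, at the price of importing the explicit Satake parameters of the lift as a black box; the paper's version is self-contained modulo Keaton's formula and the $q$-binomial identity.
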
 

\begin{rmk}
In a recently posted article \cite{Ad} on arxiv, it has been claimed
that $\lambda_F(p)$ is positive if $p$ is sufficiently large.
The method used in this paper is different than the one used in \cite{Ad}.
We express Hecke eigenvalues in terms of reciprocal polynomials,
and then use the properties of the reciprocal polynomials and
$q$-binomial theorem to derive upper and lower bounds of $\lambda_F(p)$
for all primes $p$ and hence non-negativity of $\lambda_F(p)$
for all primes $p$.
\end{rmk}

\section{Prerequisites}

\subsection{Prerequisites from $q$-binomial coefficients}\label{subsec q-binom}
 Let $q \ne 1$ be a complex number. For an integer $n$, the 
 $q$-analogue of $n$ is defined by
$$
(n)_q ~=~ \frac{q^n-1}{q-1}.
$$	
Note that for any integer $n \geq 1$, $(n)_q = 1 + q +q^2 + \cdots +  q^{n-1} \in \Z[q]$. 
Further, note that
$$
\lim_{q \to 1} (n)_q ~=~ \lim_{q \to 1} \frac{q^n-1}{q-1} ~=~ n.
$$
For any positive integer $n$, the $q$-factorial $(n)_q!$ is defined by
$$
(n)_q! ~=~ (n)_q(n-1)_q \cdots (1)_q
$$
and we set $(0)_q!=1$. For non-negative integers $m, n$ with $m \leq n$, 
the $q$-binomial coefficient is defined by
$$
\binom{n}{m}_q ~=~ \frac{(n)_q! }{(m)_q! (n-m)_q! }.
$$
Note that $\binom{n}{m}_q \in \Z[q]$ (see \cite[Theorem 2.1]{Co}, 
\cite[p. 17]{Cau}) and in particular, we have $\binom{n}{m}_q \in \Z$ 
if $q$ is an integer. 
We have the following $q$-analogue of the binomial 
theorem (see \cite[Theorem 2.2]{Co}, \cite[p. 46]{Cau}).

\begin{thm}\label{thmqbinom}
Let $n \geq 1$ be an integer. Then we have
$$
\prod_{i=0}^{n-1}(1+q^i x) 
~=~ 
\sum_{j=0}^{n} \binom{n}{j}_q q^{\frac{j(j-1)}{2}} x^j.
$$
\end{thm}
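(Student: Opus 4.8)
The plan is to prove the identity by induction on $n$. For the base case $n=1$, both sides reduce to $1+x$: the left-hand side is the single factor $1+q^{0}x$, and on the right we use $\binom{1}{0}_q=\binom{1}{1}_q=1$ together with $q^{0}=1$.

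For the inductive step, assume the identity holds for some $n\geq 1$ and multiply both sides by the new factor $1+q^{n}x$. This yields
$$
\prod_{i=0}^{n}(1+q^{i}x)
=\sum_{j=0}^{n}\binom{n}{j}_q q^{\frac{j(j-1)}{2}}x^{j}
+\sum_{j=0}^{n}\binom{n}{j}_q q^{\frac{j(j-1)}{2}+n}x^{j+1}.
$$
Shifting the index $j\mapsto j-1$ in the second sum and comparing the coefficient of $x^{j}$ for $1\leq j\leq n$, the inductive step reduces to the identity
$$
\binom{n}{j}_q q^{\frac{j(j-1)}{2}}+\binom{n}{j-1}_q q^{\frac{(j-1)(j-2)}{2}+n}
=\binom{n+1}{j}_q q^{\frac{j(j-1)}{2}},
$$
while the boundary terms $j=0$ and $j=n+1$ match immediately because $\binom{n}{0}_q=\binom{n+1}{0}_q=1$, $\binom{n}{n}_q=\binom{n+1}{n+1}_q=1$, and $\frac{n(n-1)}{2}+n=\frac{n(n+1)}{2}$.

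Since $\frac{j(j-1)}{2}-\frac{(j-1)(j-2)}{2}=j-1$, cancelling $q^{\frac{j(j-1)}{2}}$ throughout shows that the last display is equivalent to the $q$-Pascal recurrence
$$
\binom{n+1}{j}_q=\binom{n}{j}_q+q^{\,n-j+1}\binom{n}{j-1}_q.
$$
I would prove this directly from the definition: the elementary identity $(n+1)_q=(n+1-j)_q+q^{\,n+1-j}(j)_q$, which follows at once from $(m)_q=1+q+\cdots+q^{m-1}$, substituted into $\binom{n+1}{j}_q=\frac{(n+1)_q\,(n)_q!}{(j)_q!\,(n+1-j)_q!}$, splits the right-hand side into exactly $\binom{n}{j}_q$ and $q^{\,n+1-j}\binom{n}{j-1}_q$. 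The only computational content of the whole argument sits in this $q$-Pascal step, and even that is a one-line manipulation of $q$-factorials; everything else is bookkeeping with powers of $q$ and the index shift.

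Alternatively, one may bypass the recurrence altogether: writing $f_n(x)=\prod_{i=0}^{n-1}(1+q^{i}x)$, one checks the functional equation $(1+x)\,f_n(qx)=(1+q^{n}x)\,f_n(x)$, and comparing coefficients of $x^{j}$ on both sides forces the coefficients $c_j$ of $f_n$ to satisfy $c_j(q^{j}-1)=c_{j-1}(q^{n}-q^{j-1})$, which telescopes from $c_0=1$ to $c_j=q^{\frac{j(j-1)}{2}}\binom{n}{j}_q$. I would nevertheless present the inductive proof as the main argument, since it is the most self-contained and uses only the definitions recalled above.
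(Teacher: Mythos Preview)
Your inductive proof is correct: the base case, the index shift, the reduction to the $q$-Pascal recurrence
\[
\binom{n+1}{j}_q=\binom{n}{j}_q+q^{\,n-j+1}\binom{n}{j-1}_q,
\]
and its verification via $(n+1)_q=(n+1-j)_q+q^{\,n+1-j}(j)_q$ are all sound, as is the alternative functional-equation argument you sketch.

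There is nothing to compare against, however: the paper does not supply its own proof of this theorem. It is quoted as a known prerequisite (the classical $q$-binomial theorem) with references to Conrad and to Cauchy's \OE uvres, and is then used as a black box in the proof of Lemma~3 to factor the polynomial $G_p(x)$. Your argument is one of the standard proofs of this identity and would serve perfectly well if the paper wanted to be self-contained at this point.
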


\medspace

\subsection{Prerequisites from Siegel modular forms}
As before, let $S_k(\Gamma_n)$ be the space of Siegel cusp forms 
of weight $k$ and degree $n$. Please see \cite{AAb, BGHZ, EZ, HM} 
for an introduction to Siegel modular forms.

\subsubsection{Saito-Kurokawa lift}
Let $f \in S_{2k-2}(\Gamma_1)$ be a normalized elliptic Hecke eigenform
with Fourier coefficients $\{ a_f(m) \}_{m \ge 1}$. 
It was conjectured by Saito and Kurokawa \cite{NK} that 
there exists a Hecke eigenform $F \in S_k(\Gamma_2)$ such that
$$
Z_F(s) ~=~ \zeta(s-k+1) \zeta(s-k+2) L(s, f).
$$
Here $Z_F(s)$ is the spinor zeta function associated with $F$ defined by
$$
Z_F(s) ~=~ \zeta(2s-2k+4) \sum_{m=1}^{\infty} \frac{\lambda_F(m)}{m^s}
$$
and $L(s, f) = \sum_{m=1}^{\infty} \frac{a_f(m)}{m^s}$ is the modular
$L$-function associated with $f$.
 This conjecture was resolved by Maass, Andrianov and Zagier 
(see \cite{AAc, HM1, HM2, HM3, DZ}). 

We have the following expression (see \cite{SB}, \cite[p. 4]{GPS}) relating 
Hecke eigenvalues of $F$ and Fourier coefficients of $f$ at primes $p$ :
$$
\lambda_F(p) ~=~ a_f(p) + p^{k-2} + p^{k-1}.	
$$
	
\subsubsection{Ikeda lift} 
As before, let us assume that $k> n+1$. A generalization of the Saito-Kurokawa lift 
to higher degrees was predicted by Duke and Imamo\={g}lu.  
They conjectured that for a normalized elliptic Hecke eigenform 
$f \in S_{2k-n}(\Gamma_1)$,  
there exists a Hecke eigenform $F \in S_k(\Gamma_n)$ such that 
$$
L(s, F ; st) ~=~ \zeta(s) \prod_{i=1}^{n} L(s+k-i, f).
$$
Here $L(s, F ; st)$ denotes the  standard L-function associated to $F$ 
(see \cite[p. 221]{BGHZ}). The existence of such a lift was proved by Ikeda \cite{TI} 
and this lift $F$ is now called an Ikeda lift of $f$. 
Note that when $n=2$, it coincides with Saito-Kurokawa lift. 
For any prime $p$, the explicit relation between 
the $p$-th Hecke eigenvalues of $f$ and its Ikeda lift $F$ (see \cite[Eq. 1]{RK}) is given by
\begin{equation}\label{eqlambdap}
\begin{split}
\lambda_F(p) 
~=~ 
p^{\frac{nk}{2} - \frac{n(n+1)}{4}} 
\left( 
\sum_{j=1}^{\frac{n}{2}} 
\sum_{r=0}^{\lfloor \frac{j}{2} \rfloor}
(-1)^r \frac{j}{j-r} 
\binom{j-r}{r} \binom{n}{\frac{n}{2}-j}_p p^{c_{j, r}}  
a_f(p)^{j-2r} 
~+~ 
p^{-\frac{n^2}{8}} \binom{n}{\frac{n}{2}}_p 
\right),
\end{split}
\end{equation}
where 	
$$
c_{j,r} 
~=~ 
\frac{1}{2} 
 \Bigg(-\(\frac{n}{2}-j\) \(\frac{n}{2}+j\) 
 ~+~ 
 (j-2r) (n-2k+1) \Bigg).
$$
	
\begin{rmk}
Note that  $\frac{j}{j-r} \binom{j-r}{r} \in \N$ for $0 \leq r \leq j/2$ 
and for even positive integers $k, n$ with $k > n+1$, we have
\begin{equation*}
\frac{nk}{2} - \frac{n(n+1)}{4} -\frac{n^2}{8} 
\phantom{mm}\text{and}\phantom{mm}
\frac{nk}{2} - \frac{n(n+1)}{4} ~+~ c_{j,r}
\end{equation*}
are non-negative integers for $1 \leq j \leq n/2$ and $ 0 \leq r \leq j/2$.
\end{rmk}

\medspace

\section{Hecke eigenvalues of Ikeda lifts}\label{SIkdeaeigen}
In this section, we will find a polynomial $g_p(x) \in \Z[x]$ such that
$$
\lambda_F(p) ~=~ g_p(a_f(p))
$$
for all primes $p$. We also compute the roots of the polynomial $g_p(x)$. 
As an application of this, we will show that 
$$
\lambda_F(p) ~>~ 0
$$
for all primes $p$. Further, we derive lower and upper bounds for $\lambda_F(p)$.

\begin{lem}\label{lemgpfact}
For any prime $p$, let
$$
g_p(x) ~=~ \prod_{i=1}^{\frac{n}{2}}  
\( x + p^{k-i} + p^{k-n-1+i}\).
$$
Then we have	
$$
\lambda_F(p) ~=~ g_p(a_f(p))
$$
for all primes $p$.
\end{lem}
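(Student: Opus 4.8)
The plan is to start from the explicit formula \eqref{eqlambdap} and convert its double sum into the product $g_p$ by means of the $q$-binomial theorem. Throughout, let $\gamma_p$ denote the normalized Satake parameter of $f$ at $p$, i.e.\ a root of $X^{2}-p^{-(2k-n-1)/2}a_f(p)\,X+1$, so that $\gamma_p+\gamma_p^{-1}=p^{-(2k-n-1)/2}a_f(p)$ while $\gamma_p^{-1}$ is the other root. Every identity below is a Laurent-polynomial identity in $\gamma_p$, so no analytic input (such as $|\gamma_p|=1$) is needed; and since $k>n+1$ forces $k-i\ge 2$ and $k-n-1+i\ge 2$ for $1\le i\le n/2$, the polynomial $g_p$ indeed lies in $\Z[x]$.

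First I would collapse the inner sum over $r$ in \eqref{eqlambdap}. Writing $c_{j,r}=-\tfrac{n^{2}}{8}+\tfrac{j^{2}}{2}+\tfrac{(j-2r)(n-2k+1)}{2}$, one sees that $p^{c_{j,r}}a_f(p)^{j-2r}=p^{-n^{2}/8+j^{2}/2}\bigl(\gamma_p+\gamma_p^{-1}\bigr)^{j-2r}$, and then the classical Dickson identity
$$\sum_{r=0}^{\lfloor j/2\rfloor}(-1)^{r}\frac{j}{j-r}\binom{j-r}{r}\bigl(\gamma+\gamma^{-1}\bigr)^{j-2r}=\gamma^{j}+\gamma^{-j}\qquad(j\ge 1)$$
turns the $r$-sum into $p^{-n^{2}/8+j^{2}/2}(\gamma_p^{j}+\gamma_p^{-j})$. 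Hence
$$\lambda_F(p)=p^{\frac{nk}{2}-\frac{n(n+1)}{4}-\frac{n^{2}}{8}}\left(\binom{n}{n/2}_p+\sum_{j=1}^{n/2}\binom{n}{n/2-j}_p p^{j^{2}/2}\bigl(\gamma_p^{j}+\gamma_p^{-j}\bigr)\right).$$

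Next, using $\binom{n}{\ell}_p=\binom{n}{n-\ell}_p$ and $(\ell-n/2)^{2}=(n/2-\ell)^{2}$, I would recognize the bracket as the symmetric single sum $\sum_{\ell=0}^{n}\binom{n}{\ell}_p p^{(\ell-n/2)^{2}/2}\gamma_p^{\ell-n/2}$ (pair the terms $\ell$ and $n-\ell$; the term $\ell=n/2$ is the lone $\binom{n}{n/2}_p$). Since $\tfrac{(\ell-n/2)^{2}}{2}=\tfrac{\ell(\ell-1)}{2}+\tfrac{\ell(1-n)}{2}+\tfrac{n^{2}}{8}$, \thmref{thmqbinom} applied with $q=p$ and $x=p^{(1-n)/2}\gamma_p$ gives
$$\sum_{\ell=0}^{n}\binom{n}{\ell}_p p^{(\ell-n/2)^{2}/2}\gamma_p^{\ell-n/2}=p^{n^{2}/8}\gamma_p^{-n/2}\prod_{i=0}^{n-1}\bigl(1+p^{\,i+(1-n)/2}\gamma_p\bigr),$$
and the factor $p^{n^{2}/8}$ cancels the leading $p^{-n^{2}/8}$. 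Finally I would distribute $\gamma_p^{-n/2}=\prod_{i=0}^{n-1}\gamma_p^{-1/2}$ into the product and pair the factor indexed by $i$ with the one indexed by $n-1-i$; their exponents are $e$ and $-e$ with $e\in\{\tfrac12,\tfrac32,\dots,\tfrac{n-1}{2}\}$, and each pair collapses to $(\gamma_p^{-1/2}+p^{e}\gamma_p^{1/2})(\gamma_p^{-1/2}+p^{-e}\gamma_p^{1/2})=(\gamma_p+\gamma_p^{-1})+(p^{e}+p^{-e})$. Substituting $\gamma_p+\gamma_p^{-1}=p^{-(2k-n-1)/2}a_f(p)$ and distributing $p^{\frac{nk}{2}-\frac{n(n+1)}{4}}=\prod_{e}p^{(2k-n-1)/2}$ over the $n/2$ pairs turns each pair into $a_f(p)+p^{(2k-n-1)/2+e}+p^{(2k-n-1)/2-e}$; then the substitution $e=\tfrac{n+1}{2}-i$ $(1\le i\le n/2)$ yields $(2k-n-1)/2+e=k-i$ and $(2k-n-1)/2-e=k-n-1+i$, so that
$$\lambda_F(p)=\prod_{i=1}^{n/2}\bigl(a_f(p)+p^{k-i}+p^{k-n-1+i}\bigr)=g_p(a_f(p)).$$

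The one non-elementary ingredient is the Dickson identity, which is classical (and consistent with the remark above that $\tfrac{j}{j-r}\binom{j-r}{r}\in\N$); everything else is bookkeeping. The main difficulty I anticipate is exactly that bookkeeping: three separate powers of $p$ occur — the shift $n^{2}/8$, the $q$-binomial exponent $\ell(\ell-1)/2$, and the half-weight $(2k-n-1)/2$ — and the argument works only because they recombine precisely into the exponents $k-i$ and $k-n-1+i$.
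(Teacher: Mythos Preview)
Your proof is correct. Both you and the paper pivot on the $q$-binomial theorem to turn the $\binom{n}{\ell}_p$-weighted sum into a product, but the two arguments differ at both ends. You start from the double-sum formula \eqref{eqlambdap} and collapse the inner $r$-sum via the Dickson/Waring identity; the paper instead quotes from \cite{RK} the cleaner single-sum expression $\lambda_F(p)=p^{nk/2-n(n+1)/4}\sum_{i=0}^{n}p^{i(i-n)/2}\binom{n}{i}_p\alpha_f(p)^{\,i-n/2}$, which is exactly what you reach after your first two steps, so the Dickson identity never appears there. At the other end, you pair the factors indexed by $i$ and $n-1-i$ and multiply out directly; the paper instead observes that $G_p(x)/x^{n/2}$ is a reciprocal expression, hence equals $\tilde g_p\bigl(x+p^{2k-n-1}/x\bigr)$ for some monic $\tilde g_p\in\Z[x]$ (invoking \cite{Wa}), and then identifies the roots of $\tilde g_p$ by matching pairs $\{\delta_{i1},\delta_{i2}\}$ of roots of $G_p$ with product $p^{2k-n-1}$. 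Your route is more hands-on and self-contained; the paper's route is more structural and avoids the Dickson identity by citing a better starting formula. One cosmetic remark: the detour through $\gamma_p^{\pm 1/2}$ is unnecessary, since $\gamma_p^{-1}(1+p^{e}\gamma_p)(1+p^{-e}\gamma_p)=\gamma_p+\gamma_p^{-1}+p^{e}+p^{-e}$ already gives the pairing without any square root.
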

\begin{proof}
Let $F \in S_k(\Gamma_n)$ be an Ikeda lift 
of a normalized elliptic Hecke eigenform $f \in S_{2k-n}(\Gamma_1)$. 
From \cite[Eqs. 3, 5, 7, 9, 10]{RK}, we have
$$
\lambda_F(p) ~=~ p^{\frac{nk}{2} - \frac{n(n+1)}{4}} 
\sum_{i=0}^{n} p^{\frac{i(i-n)}{2}} 
\binom{n}{i}_p \alpha_f(p)^{i-\frac{n}{2}},
$$
where $\alpha_f(p)$ is a root of the polynomial 
$x^2-a_f(p) p^{-\frac{2k-n-1}{2}} x + 1$.
Set
$$
a_i 
~=~
p^{\frac{nk}{2} -  \frac{n(n+1)}{4}} 
p^{\frac{i(i-n)}{2}} \binom{n}{i}_p
$$
and consider the polynomial
$$
G_p(x) ~=~ \sum_{i=0}^{n} a_i x^i.
$$
We have
\begin{equation}\label{eqG_pin-i}
\begin{split}
\frac{G_p(x)}{x^{\frac{n}{2}}}  
&~=~ \sum_{i=0}^{n} a_i x^{i-\frac{n}{2}} 
~=~ a_{n/2} ~+~ \sum_{i=0}^{\frac{n}{2}-1} a_i \(x^{\frac{n}{2}-i} 
+ \frac{1}{x^{\frac{n}{2}-i}}\)  \\
&~=~ 
a_{n/2}~+~
\sum_{i=0}^{\frac{n}{2}-1} a_i p^{\frac{2k-n-1}{2}(i - \frac{n}{2}) } 
\( (p^{\frac{2k-n-1}{2}}x)^{\frac{n}{2}-i} + 
\(\frac{p^{\frac{2k-n-1}{2}}}{x}\)^{\frac{n}{2}-i} \). 
\end{split}
\end{equation}
Let $x =  p^{\frac{2k-n-1}{2}}y $. Then, we have
$$
\frac{G_p(y)}{y^{\frac{n}{2}}} 
~=~ 
a_{n/2} ~+~  \sum_{i=0}^{\frac{n}{2}-1} a_i p^{\frac{2k-n-1}{2}(i - \frac{n}{2}) }
	\( x ^{\frac{n}{2} - i} 
+ \(\frac{p^{2k-n-1}}{x}\)^{\frac{n}{2}-i}\).
$$
Note that there exists a polynomial $g_{p,i}(x) \in \Z[x]$ (see \cite{Wa}) such that
$$
x ^{i} + \(\frac{p^{2k-n-1}}{x}\)^{i} 
~=~ 
g_{p,i}\(x+\frac{p^{2k-n-1}}{x}\).
$$
Hence we get
$$
\frac{G_p(y)}{y^{\frac{n}{2}}} ~
=~ 
a_{n/2} ~+~ \sum_{i=0}^{\frac{n}{2}-1} a_i p^{\frac{2k-n-1}{2}(i - \frac{n}{2}) } g_{p, \frac{n}{2}-i}
	\(x+\frac{p^{2k-n-1}}{x}\) 
~=~  
\tilde{g}_p \(x+\frac{p^{2k-n-1}}{x}\),
$$	
where 
$$
\tilde{g}_p(x) 
~=~ 
a_{n/2} ~+~
\sum_{i=0}^{\frac{n}{2}-1} 
a_i p^{\frac{2k-n-1}{2}(i - \frac{n}{2}) } 
g_{p, \frac{n}{2}-i}(x).
$$
Note that $\tilde{g}_p(x)$ is a monic polynomial in $\Z[x]$ and
\begin{equation}
\lambda_F(p)	
~=~ 
\frac{G_p(\alpha_f(p))}{\alpha_f(p)^{n/2}}
~=~ 
\tilde{g}_p \Bigg(\alpha_f(p) p^{\frac{2k-n-1}{2}} 
+ \frac{p^{2k-n-1}}{\alpha_f(p) p^{\frac{2k-n-1}{2}}} \Bigg)
~=~
\tilde{g}_p(a_f(p)).
\end{equation}
We will now show that $\tilde{g}_p(x) = g_p(x)$.
By using $q$-binomial theorem (see \thmref{thmqbinom}), 
we get
\begin{equation}
\begin{split}
\sum_{i=0}^{n} p^{\frac{i(i-n)}{2}} \binom{n}{i}_p x^i
~=~
\sum_{i=0}^{n} \binom{n}{i}_p   p^{\frac{i(i-1)}{2}} 
\( p^{\frac{1-n}{2}} x\)^i  
~=~ 
\prod_{j=0}^{n-1} \(1+p^j p^{\frac{1-n}{2}} x\).
\end{split}
\end{equation}
Hence we get
\begin{equation}\label{eqGpFact}
G_p(x) 
~=~ 
p^{\frac{nk}{2} - \frac{n(n+1)}{4}} \prod_{j=0}^{n-1} 
\(1+p^j p^{\frac{1-n}{2}} x\).
\end{equation}
In order to find factorization of $\tilde{g}_p(x)$, we proceed as follows.
We have
\begin{equation*}
\begin{split}
\tilde{g}_p\(x+\frac{p^{2k-n-1}}{x}\) 
&~=~
 \frac{G_p(y)}{y^{\frac{n}{2}}} 
~=~ 
\frac{G_p(p^{-\frac{(2k-n-1)}{2}}x)}{\(p^{\frac{-(2k-n-1)}{2}} x\)^{\frac{n}{2}}}\\
&~=~
 \(p^{-\frac{(2k-n-1)}{2}} x\)^{-\frac{n}{2}} p^{\frac{nk}{2}  - \frac{n(n+1)}{4}} \prod_{j=0}^{n-1} 
			\(1+p^j p^{\frac{1-n}{2}} p^{-\frac{(2k-n-1)}{2}} x\) \\
&~=~ p^{nk - \frac{n(n+1)}{2}} x^{-\frac{n}{2}} \prod_{j=0}^{n-1} (1+p^{j-k+1}x) \\
&~=~ x^{-\frac{n}{2}} \prod_{j=0}^{n-1} (p^{k-j-1} +x).
\end{split} 
\end{equation*}
Let $\gamma_{p, 1}, \gamma_{p, 2}, \cdots, \gamma_{p, n/2}$ 
be the roots of the polynomial $\tilde{g}_p(x)$. 
Then we have 
\begin{equation*}
\begin{split}
\prod_{j=0}^{n-1} (x + p^{k-j-1} ) 
&~=~x^{\frac{n}{2}} \prod_{i=1}^{\frac{n}{2}} \(x+\frac{p^{2k-n-1}}{x} -\gamma_{p,i}\)\\
&~=~ \prod_{i=1}^{\frac{n}{2}} \(x^2- \gamma_{p, i} x + p^{2k-n-1}\).
\end{split}
\end{equation*}
Let $\delta_{i1}$ and $\delta_{i2}$ be the roots of the polynomial
$x^2- \gamma_{p, i} x + p^{2k-n-1}$. Then we get
$$
\prod_{j=0}^{n-1} (x + p^{k-j-1}) 
~=~ \prod_{i=1}^{\frac{n}{2}} (x-\delta_{i1}) (x-\delta_{i2}).
$$
Since $\delta_{i1}\delta_{i2} = p^{2k-n-1}$, we must have 
$\{\delta_{i1}, \delta_{i2}\} = \{-p^{k-i}, -p^{k-n-1+i}\}$ 
(up to some ordering of indices). 
Thus we deduce that
$$
\gamma_{p, i} ~=~ \delta_{i1} + \delta_{i2} ~=~  -p^{k-i} - p^{k-n-1+i}.
$$
Thus we have
$$
\tilde{g}_p(x) 
~=~ \prod_{i=1}^{\frac{n}{2}}\(x - \gamma_{p,i}\) 
~=~\prod_{i=1}^{\frac{n}{2}}\(x + p^{k-i} + p^{k-n-1+i}\) 
~=~ {g}_p(x).
$$
This completes the proof of \lemref{lemgpfact}.
\end{proof}	
	
\medspace	
	
\subsection{Proof of \thmref{thm4}}
Let $F \in S_k(\Gamma_n)$ be an Ikeda lift of a 
normalized elliptic Hecke eigenform $f \in S_{2k-n}(\Gamma_1)$.
Also let $G_p(x)$ and $g_p(x)$ be as before and
$$
\tilde{G}_p(x) ~=~ \frac{G_p(x)}{x^{n/2}}.
$$
Note that 
$$
\lambda_F(p) ~=~ \tilde{G}_p(\alpha_f(p)).
$$
By Deligne's bound, we can write
$$
a_f(p) ~=~ 2p^{\frac{2k-n-1}{2}} \cos\theta_p,
~~ \theta_p \in [0, \pi]. 
$$
Then, we have $\alpha_f(p) = e^{i \theta_p}$.
To prove $\lambda_F(p) > 0$ for all primes $p$, 
it is sufficient to show that 
$$
\tilde{G}_p(z) ~>~ 0 ~~\text{ for all } z \in \mathbb{S},
$$ 
where $\mathbb{S} = \{z \in \C : |z| = 1\}$ denotes the unit circle in $\C$.
From \eqref{eqGpFact}, we have 
$$
\tilde{G}_p(z) ~\neq~ 0 ~~\text{ for all } z \in \mathbb{S}.
$$ 
From \eqref{eqG_pin-i}, note that $\tilde{G}_p(1/z) = \tilde{G}_p(z)$
and hence
$$
\tilde{G}_p(z) ~\in~ \R ~~\text{ for all } z \in \mathbb{S}.
$$  
We define a function $H$ on $\R$ by
$$
H(t) ~=~ \tilde{G}_p(e^{it}),~~ t \in \R.
$$ 
Then observe that $H$ is a real valued non-vanishing 
continuous function on $\R$
and 
$$
H(0) ~=~ \tilde{G}_p(1) ~=~ G_p(1) > 0. 
$$
This implies $H(t) > 0$ for all $t \in \R$. 
Hence we deduce that $\lambda_F(p) = \tilde{G}_p(e^{i \theta_p}) > 0$ 
for all primes~$p$. Further, we have
$$
\lambda_F(p) 
~=~ |\tilde{G}_p(e^{i \theta_p})|
~=~ |G_p(e^{i \theta_p})|
~=~ p^{\frac{nk}{2} - \frac{n(n+1)}{4}} \prod_{j=0}^{n-1} 
\left|1+p^{j+\frac{1-n}{2}} e^{i \theta_p}\right|.
$$
Note that
$$
\left|1- p^{j+\frac{1-n}{2}}\right| 
~\leq ~
\left|1+p^{j+\frac{1-n}{2}} e^{i \theta_p}\right|
~\leq~ 
1 + p^{j+\frac{1-n}{2}}.
$$
Thus we have
\begin{equation*}
\begin{split}
\lambda_F(p) 
&~\geq~
p^{\frac{nk}{2} - \frac{n(n+1)}{4}} \prod_{j=0}^{n-1} 
\left|1-p^{j+\frac{1-n}{2}}\right| \\
& ~=~
p^{\frac{nk}{2} - \frac{n(n+1)}{4}} 
\prod_{j=0}^{\frac{n}{2}-1} \(1-p^{j+\frac{1-n}{2}}\)
 \prod_{j=\frac{n}{2}}^{n-1} \(p^{j+\frac{1-n}{2}}-1\)\\
&~=~
p^{\frac{nk}{2} - \frac{n(n+1)}{4}} 
\prod_{j=0}^{\frac{n}{2}-1} \(1-\frac{1}{p^{\frac{n-1}{2}-j}}\)
 \prod_{j=\frac{n}{2}}^{n-1} p^{j+\frac{1-n}{2}} \(1-\frac{1}{p^{j-\frac{n-1}{2}}}\) \\
 &~=~
p^{\frac{nk}{2} - \frac{n(n+1)}{4} + \frac{n^2}{8}} 
\prod_{j=0}^{\frac{n}{2}-1} \(1-\frac{1}{p^{\frac{n-1}{2}-j}}\)^2. 
\end{split}
\end{equation*}
Arguing in a similar way, we get
$$
\lambda_F(p) ~\leq~ 
p^{\frac{nk}{2} - \frac{n(n+1)}{4} + \frac{n^2}{8}} 
\prod_{j=0}^{\frac{n}{2}-1} \(1+\frac{1}{p^{\frac{n-1}{2}-j}}\)^2. 
$$
This completes the proof of \thmref{thm4}.
\qed

\medspace

\section*{Acknowledgments}
The first author would like to thank DAE 
number theory plan project and SPARC project 445.
The second author would like to acknowledge the Institute of Mathematical Sciences (IMSc), India and 
Queen's University, Canada for providing excellent atmosphere to work.

\medspace

\end{document}